\newtheorem{thm}{Theorem}
\newtheorem{conj}{Conjecture}
\newtheorem{lem}{Lemma}
\title{Sum-free cyclic multi-bases and
constructions of Ramsey algebras}
\author{Jeremy F. Alm\footnote{Corresponding author.}\\ Illinois College\\ Jacksonville, IL 62650\\ \texttt{alm.academic@gmail.com} \and Jacob Manske\\ Texas State University\\ San Marcos, TX 78666\\ \texttt{jmanske@gmail.com}}
\begin{document}
\maketitle

\renewcommand{\thefootnote}{\fnsymbol{footnote}}
\footnotetext{2010 MSC codes 05E15, 03G15}
\renewcommand{\thefootnote}{\arabic{footnote}}


\begin{abstract}
Given $X\subseteq \mathbb{Z}_N$, $X$ is called a \emph{cyclic basis}
if $(X+X)\cup X=\mathbb{Z}_N$, \emph{symmetric}  if $x\in X$ implies
$-x \in X$, and \emph{sum-free} if $(X+X)\cap X=\varnothing$. We
ask, for which $m$, $N\in\mathbb{Z}^+$ can the set of non-identity
elements of $\mathbb{Z}_N$ be partitioned into $m$ symmetric
sum-free cyclic bases? If, in addition, we require that distinct
cyclic bases interact in a certain way, we get a proper relation
algebra called a Ramsey algebra. Ramsey algebras (which have also
been called Monk algebras) have been constructed previously for
$2\leq m\leq 7$. In this manuscript, we provide constructions of
Ramsey algebras for every positive integer $m$ with $2\leq m\leq
400$, with the exception of $m=8$ and $m=13$.
\end{abstract}

%

\section{Introduction and motivation}

Let $N$ be a positive integer,, and let $\mathbb{Z}_N$ denote the
ring of integers modulo $N$.  For $X \subseteq \mathbb{Z}_{N}$, let
\[X + X = \left\{x_{1} + x_{2} : x_{1},x_{2} \in X\right\}.\] A
subset $X \subseteq \mathbb{Z}_{N}$ is called a \emph{cyclic basis}
for $\mathbb{Z}_{N}$ if $\left(X + X\right) \cup X =
\mathbb{Z}_{N}$. A cyclic basis $X$ is called \emph{sum-free} if
$\left(X +X \right) \cap X = \varnothing$.

One interesting question about cyclic bases is how small they can
be. More precisely, let $m(2,k)$ denote the largest $N$ such that
there is some $A\subseteq \mathbb{Z}_N $ with $|A|=k$ and $A\cup
(A+A) =\mathbb{Z}_N$ (see \cite{Jia10}). It is easy to see that
$m(2,k)=O(k^2)$; an interesting question is how large the
coefficient $\alpha$ on $k^2$ can be made so that $m(2,k)\geq \alpha
k^2$. The largest $\alpha$ currently known is $1/3 - \varepsilon$
for any $\varepsilon>0$ and for all sufficiently large $k$, due to
 Shen and Jia \cite{JiaShen}. Jia's excellent manuscript
\cite{Jia10} provides a great background on the topic of cyclic
bases as well as a healthy list of references, and the authors wish
to refer the interested reader to it.

For $m \in \mathbb{Z}^{+}$, a partition of $\mathbb{Z}_{N} \setminus
\left\{0\right\}$ into sets $X_{0},X_{1},\ldots,X_{m-1}$ is called a
\emph{sum-free cyclic multi-basis} if $X_{i}$ is a sum-free cyclic
basis for $i = 0,1,\ldots,m-1$.

A subset $X \subseteq \mathbb{Z}_{N}$ is called \emph{symmetric} if  $\forall x \left(x \in X \rightarrow -x \in X\right)$; that is, $X$ is closed under additive inverse. We take a moment to note that if $X$ is symmetric, then $X  - X = X + X$.

For ease, if a partition is a symmetric sum-free cyclic multi-basis, we shall call it an \emph{SSFCMB}.

We also desire our partitions to have one more property:
\[\forall i \forall j \left(i \neq j \rightarrow X_{i} + X_{j} = \mathbb{Z}_{N}\setminus \left\{0\right\}\right).\]
If a partition has this property, we shall say that the partition satisfies the \emph{mandatory triangle condition}. (The reason for this term is due to the connection to Ramsey algebras, which will be made clear shortly.)

The smallest example of a non-trivial SSFCMB which satisfies the
mandatory triangle condition has parameters $m = 2$ and $N = 5$. The
partition of $\mathbb{Z}_{5}\setminus \left\{0\right\}$ that we use
is $X_{0} = \left\{1,4\right\}$ and $X_{1} = \left\{2,3\right\}$. We
leave it to the reader  to check that this partition is an SSFCMB
possessing the mandatory triangle condition. The next smallest
example has parameters $m = 3$ and $N = 13$. Here, the partition of
$\mathbb{Z}_{13}\setminus \left\{0\right\}$ is
\begin{eqnarray*}
X_{0} & = & \left\{1,5,8,12\right\}\\
X_{1} & = & \left\{2,3,10,11\right\},\ \text{and}\\
X_{2} & = & \left\{4,6,7,9\right\}.
\end{eqnarray*}

In \cite{comer83}, Comer constructs these previous  two examples as
well several others. The focus of this manuscript is to attack the
following question: given $m \in \mathbb{Z}^{+}$, can we find $N \in
\mathbb{Z}^{+}$ and a partition of
$\mathbb{Z}_{N}\setminus\left\{0\right\}$ into $m$ parts which is an
SSFCMB that satisfies the mandatory triangle condition? Our main
result is summarized below as Theorem \ref{mainthm}.

\begin{thm}\label{mainthm} For every positive integer $m$ with $2 \leq m \leq 400$, with the possible exception of  $m = 8$ and $m = 13$, there exists a positive integer $N$ and a partition of $\mathbb{Z}_{N}\setminus \left\{0\right\}$ which is an SSFCMB that satisfies the mandatory triangle condition.
\end{thm}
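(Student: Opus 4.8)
The plan is to realize every required partition as a union of cosets of a multiplicative subgroup of $\mathbb{Z}_p^{*}$ for a carefully chosen prime $p$, so that all three properties reduce to statements about cyclotomic numbers. First I would record the equivalent coloring formulation: color the edge $\{u,v\}$ of the complete graph $K_p$ by the index $i$ with $u-v\in X_i$. Then the sum-free condition says that each color class is triangle-free, and the mandatory triangle condition says that for every ordered pair $i\neq j$ and every nonzero $d$ there is a representation $d=a+b$ with $a\in X_i$ and $b\in X_j$. Because each color class must be triangle-free, $p$ is automatically bounded above by the $m$-color Ramsey number $R(3,\dots,3)$ ($m$ threes), so for each fixed $m$ the search runs over only finitely many primes.

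For the main construction I would take $p\equiv 1\pmod{2m}$, let $H\le\mathbb{Z}_p^{*}$ be the subgroup of $m$-th powers (of index $m$ and order $f=(p-1)/m$), fix a primitive root $g$, and set $X_i=g^iH$. Symmetry of each $X_i$ is automatic, since $p\equiv 1\pmod{2m}$ forces $f$ to be even and hence $-1=g^{(p-1)/2}\in H$. After dividing out by coset representatives, the remaining two properties become conditions on the cyclotomic numbers $(a,b)_m$: the sum-free requirement is exactly $(0,0)_m=0$ (no two consecutive $m$-th powers), and the mandatory triangle condition is exactly $(a,b)_m>0$ for every $a\neq 0$ and every $b$. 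When a single-coset structure fails to be symmetric or sum-free, I would fall back on unions of cosets of a finer subgroup (e.g.\ of index $2m$), pairing antipodal cosets to restore symmetry; this enlarges the pool of candidate partitions without leaving the cyclotomic framework.

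The execution is then a finite verification. For each $m$ in the range I would exhibit an explicit prime $p$, together with its coset partition, and confirm $(0,0)_m=0$ and the positivity of all the remaining cyclotomic numbers by direct computation, collecting the resulting primes and partitions in a table or appendix. In principle the positivity for large $p$ is forced by the asymptotics $(a,b)_m\approx p/m^2$ (provable via Weil-type character-sum bounds), but over a bounded range of $m$ a direct check is cleaner and self-contained.

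I expect the binding constraint, and the main obstacle, to be the tension between the two cyclotomic conditions. The mandatory triangle condition wants $p$ large, so that every $(a,b)_m$ is comfortably positive, while the sum-free condition demands $(0,0)_m=0$, which can occur only while the $m$-th powers are sparse, i.e.\ for relatively small $p$. Roughly, $|X_i+X_j|\le f^2$ forces $f\gtrsim m$ and hence $p\gtrsim m^2$, yet $p$ must stay small enough to avoid consecutive $m$-th powers; for most $m$ some prime $p\equiv 1\pmod{2m}$ lands in this window, and I anticipate that $m=8$ and $m=13$ are precisely the values for which no prime in the searched range simultaneously meets both conditions. For any remaining $m$ where the clean cyclotomic construction narrowly fails, I would supplement it with a direct computer search over symmetric partitions of $\mathbb{Z}_N\setminus\{0\}$ not arising from cosets, to supply the missing cases.
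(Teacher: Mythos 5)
Your proposal is essentially the paper's own method: the authors also take a prime $N\equiv 1\pmod{2m}$, partition $\mathbb{Z}_N\setminus\{0\}$ into the cosets $X_i=x^i\cdot\langle x^m\rangle$ of the $m$-th power subgroup (their ``single-generator SSFCMBs,'' i.e.\ Comer's cyclotomic classes), reduce symmetry, sum-freeness, and the mandatory triangle condition to the same coset/cyclotomic checks you describe (their Lemmas \ref{generatorlem}--\ref{trianglelem}), and complete the proof by a finite computer search recorded as a table of $(m,N,x)$ triples. The only difference is that your suggested fallbacks (finer-index coset unions, non-coset searches) turn out to be unnecessary for $9\le m\le 400$ except $m=13$, and the paper simply leaves $m=8$ and $m=13$ open rather than patching them.
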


The proof of Theorem \ref{mainthm} is based on an optimized computer search, whose inception was based in the ideas of Comer and Maddux. Section \ref{description} describes our search algorithm. Section \ref{efficiency} describes how we have optimized the search. Appendix \ref{mikesandnovembers} contains the results of the search, summarized as a table of values of $m$ and $N$, with enough information to recreate the partition (for the curious reader).

Before we get to the search algorithm, we wish to discuss the
connection between SSFCMBs and the theory of relation algebras.   A
\emph{(proper) relation algebra} is an algebra $\langle
A,\cup,^c,\circ,^{-1},Id\rangle$, where $A$ is a subset of the power
set of some equivalence relation $E$ that forms a Boolean algebra
$\langle A,\cup,^c\rangle$, the operator $\circ$ is composition of
relations, the operator $^{-1}$ is conversion of relations, and $Id$
is the identity subrelation of $E$. A \emph{Ramsey algebra in $m$
colors} is a proper relation algebra where  all of the \emph{atoms}
(i.e., minimal non-empty relations) $A_0,\ldots,A_{m-1}$ distinct
from $Id$ satisfy
\begin{enumerate}
  \item $A^{-1}_i=A_i$;
  \item $A_i\circ A_i=A^c_i$;
  \item for $i\neq j$, $A_i\circ A_j=Id^c$.
\end{enumerate}

A \emph{cyclic Ramsey algebra in $m$ colors} is a \emph{Ramsey
algebra} where $E=\mathbb{Z}_N\times\mathbb{Z}_N$ for some $N\in
\mathbb{Z}^+$, and all of the atoms $A_i$ are defined by
``difference sets" $X_i$, so that $A_i=\{(x,y):x-y\in X_i\}$, where
$-X_i=X_i$.  In this case,  each $X_i\subseteq \mathbb{Z}_N$ is a
symmetric sum-free cyclic basis, since it must satisfy
$X_i+X_i=\mathbb{Z}_N\setminus X_i$. Furthermore, the collection
$X_0,\ldots,X_{m-1}$ is an SSFCMB for $\mathbb{Z}_N$ that has the
additional property that each sum $X_i+X_j$ is as large as it can
possibly be; that is,
\begin{equation}
  \forall i, X_i+X_i=\mathbb{Z}_N\setminus X_i\ \text{and} \label{eqno:1}
\end{equation}
\begin{equation}
  \forall i\neq j, X_i+X_j=\mathbb{Z}_N\setminus \{0\}.\label{eqno:2}
\end{equation}

Thus the existence of a cyclic Ramsey algebra in $m$ colors is
equivalent to the  existence of an SSFCMB in $m$ parts satisfying
(\ref{eqno:1}) and (\ref{eqno:2}).

Our example above with $m = 2$ and $N = 5$ is a cyclic Ramsey algebra in 2 colors.
Let $X_0=\{1,4\}$ and $X_1=\{2,3\}$.  Define two relations
\[R=\{(x,y)\in\mathbb{Z}_5\times\mathbb{Z}_5 : x-y\in X_0\}\]
and
\[B=\{(x,y)\in\mathbb{Z}_5\times\mathbb{Z}_5 : x-y\in X_1\}.\]

Let $R$ and $B$ be the two atoms besides the identity  $Id=\{(x,x) : x\in\mathbb{Z}_5\}$. They satisfy
\begin{align*}
  R\circ R &=B\cup Id,\\
  B\circ B &=R\cup Id, \text{ and}\\
  R\circ B &=R\cup B.
\end{align*}

Note that $R\circ R=B\cup Id$ follows from the fact that
$X_0+X_0=X_1\cup\{0\}$.  $X_0$ is sum-free,  which means that $R$ is
``triangle-free," as in the graph depicted in Figure
\ref{k5coloring}. The graph depicts the relations $R$ and $B$ as
sets of edges in $K_5$ colored red and blue, respectively.

Similarly, for  $m = 3$  and $N = 13$ we can construct a cyclic Ramsey algebra in 3 colors.  As above, let

\begin{eqnarray*}
X_{0} & = & \left\{1,5,8,12\right\}\\
X_{1} & = & \left\{2,3,10,11\right\},\ \text{and}\\
X_{2} & = & \left\{4,6,7,9\right\}.
\end{eqnarray*}
be a partition of the non-identity elements of $\mathbb{Z}_{13}$. Define three relations
\begin{eqnarray*}
R & = & \{(x,y)\in\mathbb{Z}_{13}\times\mathbb{Z}_{13} : x-y\in X_0\},\\
B & = & \{(x,y)\in\mathbb{Z}_{13}\times\mathbb{Z}_{13} : x-y\in X_1\},\ \text{and} \\
G & = & \{(x,y)\in\mathbb{Z}_{13}\times\mathbb{Z}_{13} : x-y\in X_2\}.
\end{eqnarray*}
See Figure \ref{k13coloring} for a graph that depicts  the relations
$R$, $B$, and $G$ as sets of edges in $K_{13}$ colored red, blue,
and green, respectively. These examples illustrate the fact that the
question of the existence of Ramsey algebras can be stated in purely
graph-theoretical terms.

\begin{figure}[htb]
\label{k5coloring}
\begin{center}
\includegraphics[width=2in]{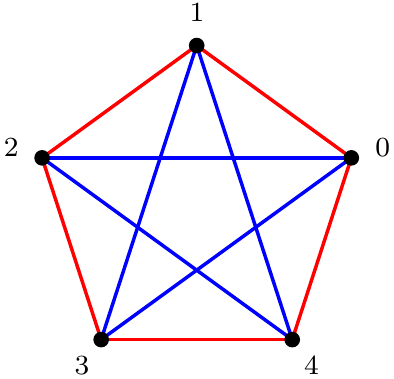}
\end{center}
\caption{A coloring of the edges of $K_{5}$ in two colors which represents a partition of $\mathbb{Z}_{5}\setminus\left\{0\right\}$ into two sets. The partition is an SSFCMB that satisfies the mandatory triangle condition.}
\end{figure}

\begin{figure}[htb]

\begin{center}
\includegraphics[width=2in]{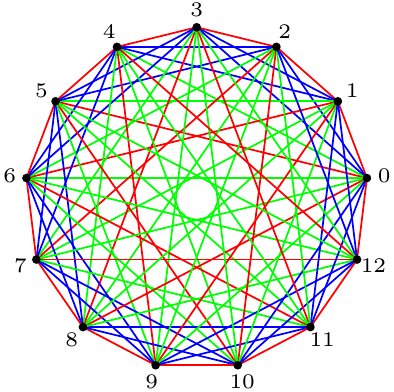}
\end{center}
\caption{A coloring of the edges of $K_{13}$ in three colors which represents a partition of $\mathbb{Z}_{13}\setminus\left\{0\right\}$ into three sets. The partition is an SSFCMB that satisfies the mandatory triangle condition.}
\label{k13coloring}
\end{figure}

This is where the name ``Ramsey algebra" comes from --- the atoms of
a (cyclic) Ramsey algebra, interpreted as edge sets in a complete
graph $K_N$ instead of as symmetric binary relations, yield an
edge-coloring of $K_N$ in $m$ colors that contains no monochromatic
triangles. Note that under this interpretation, the mandatory
triangle condition says that every edge participates in every
possible type of triangle \emph{except} for monochromatic triangles.

Let us pause briefly to discuss terminology further.  The
abstract-algebraic counterpart to Ramsey algebras has been used in
the literature under various names for over 30 years.  They were
first mentioned by Maddux \cite{Mad82} but given no name.  They have
been called, variously, Monk algebras, Maddux algebras, and very
recently, Ramsey algebras.  In \cite{HH}, Hirsch and Hodkinson use
the term ``Monk algebra" to refer to a more general kind of algebra
in which the colors can come in different ``shades", but in their
usage monochromatic triangles are still forbidden.  In his 2011 talk
at the AMS meeting in Iowa City \cite{Mad11}, Maddux defined (for
the first time, it would seem) a Ramsey algebra as we did above.  In
\cite{Kowalski}, Kowalski uses the term ``Ramsey algebra" to refer
to an abstract algebra, so that the Ramsey algebras of the present
paper would be, in his terminology, \emph{representations} of
(Kowalski's) Ramsey algebras. We choose to adopt Maddux's
terminology, since it allows the problem of existence of Ramsey
algebras to be stated in purely combinatorial terms.  See Kowalski's
paper \cite{Kowalski} for the abstract-algebraic treatment.

 The question of the existence of Ramsey algebras in all numbers of
colors was raised (though not in those terms) by Maddux in
\cite{Mad82}, Problem 2.7. Sometime in the mid-80s, Erd\H{o}s,
Szemer\'{e}di, and Trotter gave a purported proof that Ramsey
algebras exists for all sufficiently large $m $. Comer told Trotter
about the problem sometime in the early-to-mid 80s.  Trotter sent a
version of the purported proof to Comer  via e-mail, and Comer sent
it to Maddux \cite{M}.  Unfortunately, their ``proof" was in error,
as their construction did not satisfy the mandatory triangle
condition. Comer produced constructions of cyclic Ramsey algebras
for $m=2,3,4,5$ in 1983 \cite{comer83}. In 2011, Maddux produced
constructions for $m=6,7$ using the same method as Comer but with a
2011 computer. Maddux failed to construct a Ramsey algebra for
$m=8$. In \cite{Kowalski}, Kowalski simultaneously and independently
derives results that match ours for $2\leq m \leq 120$. In addition,
he finds different constructions over finite fields of prime-power
order. The present authors independently rediscovered Comer's method
of using so-called cyclotomic classes, and we show that Comer's
method does not work for $m=8$, but does work for all $m$ between 9
and 400, except possibly for $m=13$. Therefore, cyclic Ramsey
algebras in $m$ colors exist for all $m$ between 2 and 400, except
possibly $8$ and $13$. In addition, we found some SSFCMBs that
failed to be Ramsey algebras (because they failed to satisfy the
mandatory triangle condition).


\section{Description of the search algorithm}\label{description}

To describe the algorithm we used to search for these SSFCMBs, we first bring the reader's attention to a property of the examples mentioned for $m = 2$ and $m = 3$. Recall that if $m = 2$, we may take $N = 5$, $X_{0} = \left\{1,4\right\}$, and $X_{1} = \left\{2,3\right\}$. Notice that $2 \in \mathbb{Z}_{5}$ is a generator of $\mathbb{Z}_{5}^{\times}$. Modulo 5, we have $X_{0} = \left\{2^{0},2^{2}\right\}$ and $X_{1} = \left\{2^{1},2^{3}\right\}$.

For $m = 3$, we had $N = 13$ with
\begin{eqnarray*}
X_{0} & = & \left\{1,5,8,12\right\},\\
X_{1} & = & \left\{2,3,10,11\right\},\ \text{and}\\
X_{2} & = & \left\{4,6,7,9\right\}.
\end{eqnarray*}
Again, $2$ is a generator of $\mathbb{Z}_{13}^{\times}$, and we also have

\begin{eqnarray*}
X_{0} & = & \left\{2^{0},2^{3},2^{6},2^{9}\right\},\\
X_{1} & = & \left\{2^{1},2^{4},2^{7},2^{10}\right\},\ \text{and}\\
X_{2} & = & \left\{2^{2},2^{5},2^{8},2^{11}\right\}.
\end{eqnarray*}

Based on these two constructions\footnote{These two examples are well-known  ``folklore" among relation-algebraists, and can be found in many sources.}, we tried to continue this pattern. That is, given $m$, we look at primes $N = mk + 1$ with $k$ even. We find a generator $x$ of $\mathbb{Z}_{N}^{\times}$, and construct the partition
\[X_{0} = \left\{x^{0},x^{m},x^{2m},\ldots,x^{(k-1)m}\right\}\]
with $X_{i} = x\cdot X_{i-1},\ \text{for $i = 1,2,\ldots,m-1$}$.

For ease, if a partition constructed in this fashion is an SSFCMB, we shall call it a \emph{single-generator SSFCMB}. We wish to make it clear that our algorithm  searches only for single-generator SSFCMBs.
\subsection{The search algorithm for single-generator SSFCMBs}
Below we describe the search algorithm as a series of steps.
\begin{enumerate}
\item
Use the Sieve of Eratosthenes to generate a list $P$ of primes
smaller than 2000000.
\item
Fix a positive integer $m$.
\item
Range over elements of $P$ until we come across a prime $N \equiv 1\ \text{(mod $2m$)}.$
\item
Set $k = \dfrac{N-1}{m}$.
\item
Find the prime divisors $p_{1},p_{2},\ldots,p_{r}$ of $N-1$.
\item
Find the smallest $x \in \mathbb{Z}_{N}^{\times}$ such that $x^{(N-1)/p_{i}} \not\equiv 1\ \text{(mod $N$)}$ for every $i \in \left\{1,2,\ldots,r\right\}$. Such $x$ is the smallest generator of the cyclic group $\mathbb{Z}_{N}^{\times}$.
\item
Compute $X_{0} = \left\{x^{0},x^{m},\ldots,x^{(k-1)m}\right\}$.
\item
Check that $X_{0}$ is sum-free and that $|X_{0}+X_{0}|=N-k$. If it
is, proceed; otherwise, discard $N$ and keep checking the elements
of $P$.
\item
Compute $X_{i} = x \cdot X_{i-1}$ for $i = 1,2,\ldots,m-1$.
\item
For $i = 1,2,\ldots,m-1$, check that $X_{0} + X_{i} = \mathbb{Z}_{N}\setminus \left\{0\right\}$.
\end{enumerate}

To see that this collection of steps is sufficient for the constructed partition to form an SSFCMB, we turn our attention to Section \ref{efficiency}, which provides the lemmas we used to complete some of the steps.

\section{Efficiency lemmas}\label{efficiency}

This section consists of a collection of lemmas which are used to improve the efficiency of the search algorithm. Together, they significantly reduce the number of checks that need to be made from what would be required in a na\"{\i}ve approach.

Lemma \ref{generatorlem} states that for given $m$ and prime $N \equiv 1\ \text{(mod $2m$)}$, it suffices to check only a single generator. Lemma \ref{sumfreelem} states that we  need only to check whether the element $1$ is in $X_{0} + X_{0}$ to determine if $X_{0}$ is sum-free. Lemma \ref{Xzerosumlem} states that if $X_{0}$ is a sum-free cyclic basis, then so is $X_{i}$ for $i = 1,\ldots,m-1$. Lemma \ref{trianglelem} reduces the number of calculations required to check if an SSFCMB satisfies the mandatory triangle condition from $O(N^{2})$ to $O(N)$.

Throughout this section, we let  $m \in \mathbb{Z}^{+}$ and let
 $N = mk + 1$ be a prime number.  Note that a version of Lemma \ref{generatorlem} appears in
\cite{comer83}.

\begin{lem}\label{generatorlem}
 If  $x$ and $y$ are generators of $\mathbb{Z}_{N}^{\times}$, then
\[\left\{x^{0},x^{m},x^{2m},\ldots,x^{(k-1)m}\right\} = \left\{y^{0},y^{m},y^{2m},\ldots,y^{(k-1)m}\right\}.\]
\end{lem}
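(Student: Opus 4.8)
The plan is to identify both sets as the solution set in $\mathbb{Z}_N$ of a single polynomial equation that makes no reference to the choice of generator. First I would recall that since $N = mk+1$ is prime, the group $\mathbb{Z}_N^{\times}$ is cyclic of order $N-1 = mk$; thus any generator $x$ satisfies $x^{mk} = 1$, and the listed set $\left\{x^0, x^m, \ldots, x^{(k-1)m}\right\}$ is precisely the cyclic subgroup $\langle x^m\rangle$ generated by $x^m$.

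The key step is to show this set equals $\left\{z \in \mathbb{Z}_N : z^k = 1\right\}$, independent of $x$. For the forward inclusion, each element $z = x^{jm}$ satisfies $z^k = x^{jmk} = \left(x^{mk}\right)^j = 1$, so every listed element is a $k$-th root of unity. For the reverse inclusion (and to guarantee we have exactly $k$ elements), I would argue by counting: the $k$ exponents $0, m, 2m, \ldots, (k-1)m$ are distinct modulo $mk$, and since $x$ has order exactly $mk$, the corresponding powers $x^0, x^m, \ldots, x^{(k-1)m}$ are $k$ distinct elements of $\mathbb{Z}_N$. Because $\mathbb{Z}_N$ is a field, the polynomial $z^k - 1$ has at most $k$ roots; having exhibited $k$ distinct roots among the listed elements, they must be all of them. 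Hence
\[\left\{x^0, x^m, x^{2m}, \ldots, x^{(k-1)m}\right\} = \left\{z \in \mathbb{Z}_N : z^k = 1\right\}.\]

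Running the identical argument with $y$ in place of $x$ yields the same right-hand side, and the two left-hand sides are therefore equal, completing the proof. The one point requiring care is the distinctness of the $k$ listed powers: this is exactly where the hypothesis that $x$ is a \emph{generator} (order $mk$, not merely a unit) is used, since it is what guarantees $x^m$ has order $k$ and hence that the subgroup $\langle x^m \rangle$ attains its full size $k$. Equivalently, one could invoke the standard fact that a finite cyclic group has a unique subgroup of each order dividing the group order: both $\langle x^m\rangle$ and $\langle y^m\rangle$ are subgroups of $\mathbb{Z}_N^{\times}$ of order $k$, hence identical. I expect no real obstacle here; the result is a clean consequence of the field and cyclic-group structure.
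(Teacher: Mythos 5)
Your argument is correct, but it takes a different route from the paper's. The paper's proof writes $y = x^{\alpha}$ for some integer $\alpha$ (possible since $x$ generates $\mathbb{Z}_N^{\times}$) and then computes $y^{\ell m} = x^{(\alpha \ell)m}$, showing directly that every listed power of $y$ is a listed power of $x$; the reverse inclusion follows by symmetry. You instead characterize both sets intrinsically as $\left\{z \in \mathbb{Z}_N : z^k = 1\right\}$, the set of $k$-th roots of unity, using the fact that $z^k - 1$ has at most $k$ roots over the field $\mathbb{Z}_N$ (equivalently, that a cyclic group has a unique subgroup of each order dividing the group order). The paper's computation is shorter and more elementary, but it leaves implicit both the reduction of exponents modulo $mk$ needed to land back in the listed set and the symmetric reverse inclusion; your version makes the generator-independence of the set conceptually transparent and handles cardinality explicitly via the distinctness of the exponents $0, m, \ldots, (k-1)m$ modulo $mk$. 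Both proofs are complete and correct; yours is slightly more machinery for a slightly more self-contained statement.
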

\begin{proof}
Suppose $x$ and $y$ are generators of $\mathbb{Z}_{N}^{\times}$. We must show that every power of $y^{m}$ is some power of $x^{m}$.

To that end, fix a nonnegative integer $\ell$. Since $x$ is a generator of $\mathbb{Z}_{N}^{\times}$, there exists an integer $\alpha$ so that $x^{\alpha} = y$. Hence,
\[y^{\ell m} = \left(x^{\alpha}\right)^{\ell m} = x^{\alpha \ell m} = x^{\left(\alpha \ell \right)m},\]
as desired.
\end{proof}

\begin{lem}\label{sumfreelem}
 If $x$ is a generator of $\mathbb{Z}_{N}^{\times}$ and $X_{0} = \left\{x^{0},x^{m},x^{2m},\ldots,x^{(k-1)m}\right\},$ then $X_{0}$ is sum-free if and only if $1 \notin \left(X_{0} + X_{0}\right)$.

\end{lem}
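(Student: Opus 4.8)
The plan is to exploit the fact that $X_0$ is not merely a subset of $\mathbb{Z}_N^\times$ but a multiplicative subgroup. Since $x$ generates the cyclic group $\mathbb{Z}_N^\times$ of order $N-1 = mk$, the element $x^m$ has order $(N-1)/\gcd(m,N-1) = k$, so $X_0 = \left\{x^{0}, x^{m}, \ldots, x^{(k-1)m}\right\}$ is exactly the cyclic subgroup $\langle x^m\rangle$ of order $k$ — the subgroup of $m$-th powers in $\mathbb{Z}_N^\times$. In particular $1 \in X_0$, and $X_0$ is closed under both multiplication and multiplicative inversion. These closure properties are the only structural facts the argument needs.

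First I would dispatch the forward direction, which is immediate: if $X_0$ is sum-free, then $\left(X_0 + X_0\right)\cap X_0 = \varnothing$, and since $1 \in X_0$, it follows at once that $1 \notin X_0 + X_0$.

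The substance lies in the reverse direction, which I would prove by contraposition. Suppose $X_0$ is not sum-free, so there is some $z \in \left(X_0 + X_0\right)\cap X_0$, say $z = a + b$ with $a, b, z \in X_0$. Because $X_0$ is a group, $z^{-1} \in X_0$, and multiplying $z = a + b$ through by $z^{-1}$ yields $1 = z^{-1}a + z^{-1}b$. By closure of $X_0$ under multiplication, both $z^{-1}a$ and $z^{-1}b$ lie in $X_0$, whence $1 \in X_0 + X_0$. This contradicts the hypothesis and completes the argument.

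The only genuine obstacle is the initial observation that $X_0$ is a multiplicative subgroup; once that is in hand, the ``scaling'' trick of dividing a putative witness by $z$ translates an arbitrary failure of sum-freeness into the single canonical witness $1 \in X_0 + X_0$, and the rest is routine. This is exactly the feature that makes the lemma valuable for the search, since it replaces a check ranging over all of $X_0$ with one membership test.
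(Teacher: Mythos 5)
Your proof is correct, and its core idea --- rescale a witness of non-sum-freeness by a multiplicative unit to force the canonical witness $1 \in X_0 + X_0$ --- is the same one the paper uses. But your execution is cleaner in a way worth noting. The paper writes a failure of sum-freeness as $x^{m\alpha} + x^{m\beta} = x^{m\gamma}$ and factors out $x^{m\mu}$ where $\mu$ is the \emph{minimum} of the three exponents; in the case $\mu = \alpha$ (or $\beta$) this lands on $1 \in X_0 - X_0$, and the paper must then invoke the symmetry of $X_0$ to convert $X_0 - X_0$ into $X_0 + X_0$. That symmetry ($-1 \in \langle x^m\rangle$) silently depends on $k$ being even, which holds in the algorithm's setting ($N \equiv 1 \pmod{2m}$) but is not part of the lemma's hypotheses as stated. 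You instead always divide by the element $z$ sitting on the sum side, obtaining $1 = z^{-1}a + z^{-1}b$ directly, so you need only that $X_0 = \langle x^m\rangle$ is closed under multiplication and inversion --- no symmetry, no case split, no parity assumption on $k$. Your observation that $1 = x^0 \in X_0$ also makes the forward direction genuinely immediate rather than merely asserted. In short, same mechanism, but your version is slightly more general and removes a hidden hypothesis.
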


\begin{proof}
It is clear that if $X_{0}$ is sum-free, then $1 \notin \left(X_{0} + X_{0}\right)$. For the other direction, suppose $X_{0}$ is not sum-free. This means there exist $\alpha$, $\beta$, and $\gamma$ so that
\begin{equation}\label{sumfreelemeq}
x^{m\alpha} + x^{m\beta} = x^{m\gamma}.
\end{equation}
 If $\min \left\{\alpha, \beta,\gamma\right\} = \alpha$, we may factor out $x^{m\alpha}$ from both sides of (\ref{sumfreelemeq}) to get $1 + x^{m\left(\beta-\alpha\right)} = x^{m\left(\gamma - \alpha\right)}$, or $1 = x^{m\left(\gamma-\alpha\right)} - x^{m\left(\beta-\alpha\right)}$, so $1 \in \left(X_{0} - X_{0}\right)$. Since $X_{0}$ is symmetric, $X_{0} - X_{0} = X_{0} + X_{0}$.

Similarly, if $\min \left\{\alpha,\beta,\gamma\right\} = \gamma$, then we may factor out $x^{m\gamma}$ from both sides of (\ref{sumfreelemeq}), and get that $1 \in \left(X_{0} + X_{0}\right)$.
\end{proof}

\begin{lem}\label{Xzerosumlem}
 Suppose $x$ is a generator of $\mathbb{Z}_{N}^{\times}$. For $i \in \left\{0,1,\ldots,m-1\right\}$, define

\[ X_{i} = \left\{x^{i},x^{m + i},x^{2m + i},\ldots,x^{(k-1)m + i}\right\}.\]
If $X_{0} + X_{0} = \mathbb{Z}_{N}\setminus X_{0}$, then
\[X_{i} + X_{i} = \mathbb{Z}_{N}\setminus X_{i}\]
for all $i \in \left\{1,2,\ldots,m-1\right\}$.
\end{lem}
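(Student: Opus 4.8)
The plan is to exploit the fact that each $X_i$ is simply a multiplicative dilate of $X_0$. First I would observe that, directly from the definitions given, $X_i = x^i \cdot X_0$, since multiplying each element $x^{jm}$ of $X_0$ by $x^i$ produces $x^{jm+i}$. Thus the entire configuration for index $i$ is obtained from the configuration for index $0$ by applying the map $\phi \colon \mathbb{Z}_N \to \mathbb{Z}_N$ defined by $\phi(z) = x^i z$.

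The key step is to note that $\phi$ is an automorphism of the additive group $(\mathbb{Z}_N, +)$. It is additive because $x^i(a+b) = x^i a + x^i b$, and it is a bijection because $x^i$ is a unit in $\mathbb{Z}_N$ (as $x$ generates $\mathbb{Z}_N^{\times}$). Two consequences follow immediately. First, $\phi$ commutes with sumsets: for any $A, B \subseteq \mathbb{Z}_N$ we have $\phi(A) + \phi(B) = \phi(A + B)$. Second, since $\phi$ is a bijection of $\mathbb{Z}_N$ onto itself, it respects complements, so that $\phi(\mathbb{Z}_N \setminus S) = \mathbb{Z}_N \setminus \phi(S)$ for any $S \subseteq \mathbb{Z}_N$.

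With these in hand, the conclusion is a short computation, uniform in $i$. Applying the first consequence with $A = B = X_0$ gives $X_i + X_i = \phi(X_0) + \phi(X_0) = \phi(X_0 + X_0)$. Now invoking the hypothesis $X_0 + X_0 = \mathbb{Z}_N \setminus X_0$ followed by the second consequence yields $\phi(X_0 + X_0) = \phi(\mathbb{Z}_N \setminus X_0) = \mathbb{Z}_N \setminus \phi(X_0) = \mathbb{Z}_N \setminus X_i$. Chaining these equalities produces $X_i + X_i = \mathbb{Z}_N \setminus X_i$, exactly as claimed.

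I do not anticipate a genuine obstacle here; the single point demanding care is the verification that $\phi$ is a bijection, which is precisely where the assumption that $x$ generates $\mathbb{Z}_N^{\times}$ (so that $x^i$ is invertible) enters. Everything else is formal manipulation of dilations of sets, and one should simply be careful to record both that dilation distributes over the sumset operation and that, as a bijection, it commutes with set complementation.
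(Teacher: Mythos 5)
Your proof is correct and is essentially the paper's argument: the paper also just multiplies and divides by $x^{i}$, doing so element by element in each of the two inclusions rather than invoking the set-level facts that dilation by a unit commutes with sumsets and with complementation. If anything, your packaging is slightly cleaner, since the paper's reverse inclusion writes an arbitrary $z \in \mathbb{Z}_{N}\setminus X_{i}$ as a power of $x$, which silently skips the case $z = 0$; in your version that case is handled automatically because $\phi(0) = 0$ and $\phi$ is a bijection.
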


\begin{proof}

First we check that each $X_i$  is sum-free. Every element of $X_{i}
+ X_{i}$ is of the form
\[x^{\alpha m + i} + x^{\beta m + i}\]
for some integers $\alpha$ and $\beta$. If there is an integer $q$ so that $x^{\alpha m + i} + x^{\beta m + i} = x^{qm + i}$, then by factoring out $x^{i}$, we have
\[x^{\alpha m} + x^{\beta m} = x^{qm},\]
which is a contradiction, as $X_{0} + X_{0} = \mathbb{Z}_{N}\setminus X_{0}$.

Suppose $z \in \mathbb{Z}_{N}\setminus X_{i}$.
Recall that $x$ is a generator of $\mathbb{Z}_{N}^{\times}$, so there exists an integer $k$ so that $z = x^{k}$. Since $x^{k}\notin X_{i}$, we have $x^{k - i} \notin X_{0}$. This means there exist integers $\alpha$ and $\beta$ so that
\[x^{\alpha m} + x^{\beta m} = x^{k-i}.\]
Multiplying both sides by $x^{i}$ achieves the desired result.
\end{proof}

\begin{lem}\label{trianglelem}
 Suppose $x$ is a generator of $\mathbb{Z}_{N}^{\times}$. For $i \in \left\{0,1,\ldots,m-1\right\}$, define

\[ X_{i} = \left\{x^{i},x^{m + i},x^{2m + i},\ldots,x^{(k-1)m + i}\right\}.\]
If $X_{0} + X_{i} = \mathbb{Z}_{N}\setminus\left\{0\right\}$ for all $i \in \left\{1,2,\ldots,m-1\right\}$, then
\[\forall i \forall j \left(i \neq j \rightarrow X_{i} + X_{j} = \mathbb{Z}_{N}\setminus\left\{0\right\}\right).\]
\end{lem}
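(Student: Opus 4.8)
The plan is to exploit the fact that multiplication by the generator $x$ cyclically permutes the sets $X_0, X_1, \ldots, X_{m-1}$, and that multiplication by any unit of $\mathbb{Z}_N$ is a bijection that respects sumsets. Combined with the hypothesis that $X_0 + X_i = \mathbb{Z}_N \setminus \{0\}$ for every $i \neq 0$, this reduces the general case $X_i + X_j$ to a case already covered by the hypothesis.

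First I would establish the key structural fact: for every index $i$ we have $x \cdot X_i = X_{(i+1) \bmod m}$, where $x \cdot X_i$ denotes $\{x \cdot s : s \in X_i\}$. Multiplying each generator $x^{am+i}$ of $X_i$ by $x$ raises its exponent by one, which directly gives $X_{i+1}$ when $i + 1 < m$. The one point requiring care is the wraparound $i = m-1$: there the exponents become $am + m = (a+1)m$, and since $N = mk+1$ forces $x^{km} = x^{N-1} = 1$, the element $x^{km}$ collapses to $x^0$, so the resulting set is exactly $X_0$. Iterating, $x^t \cdot X_i = X_{(i+t) \bmod m}$ for every integer $t$.

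Next I would record two elementary facts about multiplication by a unit $u \in \mathbb{Z}_N^{\times}$: it distributes over sumsets, $u \cdot (A + B) = (u \cdot A) + (u \cdot B)$, and it fixes the set of nonzero elements, $u \cdot (\mathbb{Z}_N \setminus \{0\}) = \mathbb{Z}_N \setminus \{0\}$, since it is a bijection of $\mathbb{Z}_N$ sending $0$ to $0$.

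Finally I would combine these. Given $i \neq j$ in $\{0, \ldots, m-1\}$, choose an integer $t$ with $t \equiv -i \pmod m$ and apply multiplication by $x^t$. By the shift identity, $x^t \cdot X_i = X_0$ and $x^t \cdot X_j = X_d$ where $d = (j-i) \bmod m$; since $i \neq j$, we have $d \in \{1, \ldots, m-1\}$. Using distributivity and the hypothesis,
\[
x^t \cdot (X_i + X_j) = (x^t \cdot X_i) + (x^t \cdot X_j) = X_0 + X_d = \mathbb{Z}_N \setminus \{0\}.
\]
Multiplying back by $x^{-t}$ and using that units fix the nonzero elements yields $X_i + X_j = \mathbb{Z}_N \setminus \{0\}$, as desired. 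The only real obstacle is the wraparound bookkeeping in the shift identity; once that is nailed down the remaining steps are purely formal.
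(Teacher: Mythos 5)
Your proof is correct and follows essentially the same route as the paper's: reduce $X_i+X_j$ to $X_0+X_{j-i}$ by multiplying through by a suitable power of the generator and invoking the hypothesis. The only difference is cosmetic --- you phrase the reduction at the level of sets via the shift identity $x\cdot X_i = X_{(i+1)\bmod m}$, which also cleanly delivers the containment $0\notin X_i+X_j$, whereas the paper argues element-by-element (taking $j>i$ without loss of generality instead of reducing $j-i$ modulo $m$).
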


\begin{proof}
Fix $i$ and $j$ with $i \neq j$. Without loss of generality, say $j > i$. Given a nonnegative integer $k$, we need to show that there exist integers $\alpha$ and $\beta$ so that
\[x^{\alpha m + i} + x^{\beta m + j} = x^{k}.\]

Since $X_{0} + X_{j-i} = \mathbb{Z}_{N} \setminus \left\{0\right\}$, there exist integers $\alpha$ and $\beta$ so that
\[x^{\alpha m} + x^{\beta m + (j - i)} = x^{k - i}.\]
Multiplying both sides by $x^{i}$ gives the desired result.
\end{proof}

\section{Future directions}\label{future}
Although we have found constructions for many values of $m$, we have
not gained any insight into any sort of pattern, as the sequence of
successive moduli is not even monotonic. If there is a pattern, it
currently eludes the authors.

The recursive upper bound from \cite{greenwoodgleason,smallramsey}
gives $R(3,3,3,3,3,3,3,3)\leq 109602$. By checking every candidate
prime up through this bound, we were able to determine that there is
no single-generator SSFCMB for $m = 8$.

\begin{thm}
Let $N \in \mathbb{Z}^{+}$. There does not exist a partition of
$\mathbb{Z}_{N}\setminus \left\{0\right\}$ into  $8$ parts that is a
single-generator SSFCMB.
\end{thm}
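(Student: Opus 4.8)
The plan is to reduce the statement to a finite search by pairing the cyclotomic description of a single-generator SSFCMB with the recursive upper bound on the multicolor Ramsey number $R(3,3,3,3,3,3,3,3)$, and then to dispatch that finite search by machine. First I would isolate which $N$ can possibly support a single-generator construction in $m=8$ parts. Since the blocks are the cosets $X_i = x^i\langle x^8\rangle$ of the subgroup generated by a generator $x$ of $\mathbb{Z}_N^\times$, and since these cosets must partition all of $\mathbb{Z}_N\setminus\{0\}$, the modulus $N$ must be prime. Symmetry of the blocks requires $-1 = x^{(N-1)/2}\in X_0$, and because $(N-1)/2 = 4k$ this holds precisely when $k=(N-1)/8$ is even, i.e.\ when $N\equiv 1\pmod{16}$. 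For such $N$ the blocks $X_0,\dots,X_7$ are exactly the octic cyclotomic classes, and by Lemma~\ref{generatorlem} the underlying partition is independent of the chosen generator, so there is a single well-defined candidate partition for each admissible $N$.

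Next I would make the per-prime test as cheap as possible using the efficiency lemmas. By Lemma~\ref{sumfreelem}, $X_0$ is sum-free if and only if $1\notin X_0+X_0$; by Lemma~\ref{Xzerosumlem}, once $X_0$ is a sum-free cyclic basis every $X_i$ is as well. Hence, for a fixed prime $N\equiv 1\pmod{16}$, the cyclotomic partition is an SSFCMB if and only if $X_0$ alone is a sum-free cyclic basis, which I can decide by checking $1\notin X_0+X_0$ together with $|X_0+X_0| = N-k$. This reduces the decision for each $N$ to one inexpensive computation on the single block $X_0$.

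The step that makes the search finite is the Ramsey bound. Any SSFCMB in $8$ parts, single-generator or not, induces an edge-coloring of $K_N$ with no monochromatic triangle: color the edge $\{u,v\}$ by the block containing $u-v$, and observe that a monochromatic triangle on $u,v,w$ would force $(u-v)+(v-w)=u-w$ with all three nonzero differences in one block $X_i$, contradicting $(X_i+X_i)\cap X_i=\varnothing$. Therefore $N<R(3,3,3,3,3,3,3,3)$, and iterating the recursion $R_r(3)\le r\bigl(R_{r-1}(3)-1\bigr)+2$ from $R_3(3)=17$ yields $R(3,3,3,3,3,3,3,3)\le 109602$, so $N\le 109601$. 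Combining this with the first paragraph, every $N$ that could host a single-generator SSFCMB in $8$ parts is a prime $N\equiv 1\pmod{16}$ with $N\le 109601$, a finite list.

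I would then run the test of the second paragraph over all primes $N\equiv 1\pmod{16}$ up to $109601$ and verify that in each case $X_0$ fails to be a sum-free cyclic basis (either $1\in X_0+X_0$ or $|X_0+X_0|\neq N-k$), which proves the theorem. The main obstacle is not any individual calculation but the rigor of the bounding step: only after the Ramsey estimate confines the candidates to finitely many primes does the exhaustive machine verification constitute a proof. A secondary point that must be checked carefully is that no composite $N$, and no prime with $k$ odd, admits the single-generator construction at all, so that restricting attention to primes $N\equiv 1\pmod{16}$ discards no genuine candidates.
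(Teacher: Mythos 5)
Your proposal is correct and follows essentially the same route as the paper: the authors bound $N$ by the recursive Ramsey estimate $R(3,3,3,3,3,3,3,3)\leq 109602$, note that any $8$-part SSFCMB yields a monochromatic-triangle-free $8$-coloring of $K_N$, and then exhaustively check every candidate prime up to that bound by machine using the efficiency lemmas of Section~\ref{efficiency}. The only content of the paper's argument beyond what you wrote is the reported outcome of that finite computer search.
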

For the case of $m = 13$, the recursive bound is too large for the
computing power  available to the authors to rule out existence of a
single-generator SSFCMB. (The recursive upper bound is $\approx
1.69\cdot 10^{10}$.) However, if there is such a construction for $m
= 13$, the modulus $N$ must exceed 190997. Since this value is more
than 100 times the size of those moduli for other similarly small
values of $m$, we conjecture that there is no such partition for $m
=13$.

\begin{conj}
Let $N \in \mathbb{Z}^{+}$. There does not exist a partition  of
$\mathbb{Z}_{N}\setminus \left\{0\right\}$ into 13 parts that is a
single-generator SSFCMB.
\end{conj}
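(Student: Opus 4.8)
The plan is to treat the conjecture exactly as the case $m=8$ was treated: as a finite exhaustive verification, and then to pinpoint where that verification currently breaks down. By the equivalence established in the introduction, a single-generator SSFCMB in $13$ parts satisfying the mandatory triangle condition corresponds to a $13$-coloring of the edges of $K_N$ with no monochromatic triangle (each sum-free $X_i$ forbids triangles in color $i$). Consequently any admissible $N$ must satisfy $N < R(\underbrace{3,3,\ldots,3}_{13})$, and the recursive upper bound gives $N \lesssim 1.69\cdot 10^{10}$. This reduces the conjecture to checking the finitely many primes $N \equiv 1 \pmod{26}$ below this bound.

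For each such prime the efficiency lemmas collapse the test to a few cheap computations. By Lemma~\ref{generatorlem} it suffices to examine the single partition arising from the least generator $x$ of $\mathbb{Z}_N^{\times}$; by Lemma~\ref{sumfreelem} sum-freeness of $X_0$ reduces to the single check $1 \notin X_0 + X_0$; Lemma~\ref{Xzerosumlem} then propagates condition~(\ref{eqno:1}) from $X_0$ to every $X_i$ at no extra cost; and Lemma~\ref{trianglelem} reduces the mandatory triangle condition~(\ref{eqno:2}) from all $\binom{13}{2}$ pairwise sums to the twelve sums $X_0 + X_i$, cutting the per-prime cost from $O(N^2)$ to $O(N)$. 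First I would run this pipeline as far as the hardware permits, thereby raising the known lower bound on any hypothetical $N$ beyond the value $190997$ already reported.

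The hard part---and the reason this is stated as a conjecture rather than a theorem---is simply the size of the search space: the Ramsey bound $1.69 \cdot 10^{10}$ is several orders of magnitude past what a direct prime-by-prime sweep can reach with the available resources. Turning the conjecture into a theorem therefore seems to require either a drastic sharpening of the upper bound on $N$ (via a better bound for the multicolor Ramsey number, or a density argument tailored to cyclotomic cosets of index $13$), or a genuine number-theoretic non-existence proof. For the latter I would recast conditions~(\ref{eqno:1}) and~(\ref{eqno:2}) in terms of the cyclotomic numbers of order $13$: taking $X_0$ to be the subgroup of nonzero $13$th powers and $X_i = x^i X_0$, sum-freeness of $X_0$ is the vanishing of the cyclotomic number counting $u \in X_0$ with $1 + u \in X_0$, while the covering and triangle conditions assert that the remaining cyclotomic numbers are all nonzero. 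The aim would be to show this system of vanishing and non-vanishing constraints is unsatisfiable for every $N$. I expect this number-theoretic route, not the computation, to be the true obstacle: cyclotomic numbers of prime order $13$ depend on many parameters governed by the representation of $N$ by associated forms, so ruling out all $N$ uniformly is substantially harder than the finite check that settled $m=8$.
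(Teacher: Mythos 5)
This statement is a \emph{conjecture}, and the paper offers no proof of it --- only the evidence that the exhaustive prime-by-prime search (the same pipeline used to settle $m=8$) found no admissible modulus $N \le 190997$ for $m=13$, while the recursive Ramsey upper bound of roughly $1.69\cdot 10^{10}$ is computationally out of reach; your proposal correctly recognizes this and reproduces essentially that same reasoning, including the reduction via Lemmas~\ref{generatorlem}--\ref{trianglelem}. Your closing sketch of a cyclotomic-number non-existence argument is a sensible direction but is not pursued in the paper, so it is an addition rather than a divergence.
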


\section{Acknowledgements}
The authors would like to thank Jian Shen for invaluable
conversations regarding this topic.  We also thank Roger Maddux for
background information on Ramsey/Monk/Maddux algebras and for some
independent verification, as well as the two anonymous referees for
their helpful comments on the manuscript. Finally, we thank Tomasz
Kowalski for the independent verification and for sharing his
manuscript with us.

\appendix
\section{Table of $m$ and corresponding moduli}\label{mikesandnovembers}
Below we include tables containing the corresponding smallest
modulus $N$ for each value of $m$,  together with the smallest
generator $x$ of $\mathbb{Z}_{N}^{\times}$ needed to construct a
single-generator SSFCMB. Notice that $N = mk + 1$ for some positive
integer $k$ in every case. To reconstruct any of the partitions, set
$X_{0} = \left\{x^{0},x^{m},x^{2m},\ldots,x^{(k-1)m}\right\}$ and
$X_{i} = x\cdot X_{i-1}$ for $i = 1,2,\ldots,m-1$. Hence,
independent verification of any of the triples below is quite
straightforward.

As mentioned in Section \ref{future}, the values $m = 8$ and  $m = 13$ are missing from the table.

\begin{tabular}{|r|r|r|}
\hline
$m$ & $N$ & $x$\\
\hline
$2$ & $5$ & $2$\\
\hline
$3$ & $13$ & $2$\\
\hline
$4$ & $41$ & $6$\\
\hline
$5$ & $71$ & $7$\\
\hline
$6$ & $97$ & $5$\\
\hline
$7$ & $491$ & $2$\\
\hline
$9$ & $523$ & $2$\\
\hline
$10$ & $1181$ & $7$\\
\hline
$11$ & $947$ & $2$\\
\hline
$12$ & $769$ & $11$\\
\hline
$14$ & $1709$ & $3$\\
\hline
$15$ & $1291$ & $2$\\
\hline
$16$ & $1217$ & $3$\\
\hline
$17$ & $4013$ & $2$\\
\hline
$18$ & $2521$ & $17$\\
\hline
$19$ & $1901$ & $2$\\
\hline
$20$ & $2801$ & $3$\\
\hline
$21$ & $1933$ & $5$\\
\hline
$22$ & $3257$ & $3$\\
\hline
$23$ & $3221$ & $10$\\
\hline
$24$ & $4129$ & $13$\\
\hline
$25$ & $3701$ & $2$\\
\hline
$26$ & $4889$ & $3$\\
\hline
$27$ & $5563$ & $2$\\
\hline
$28$ & $8849$ & $3$\\
\hline
$29$ & $6323$ & $2$\\
\hline
$30$ & $5521$ & $11$\\
\hline
$31$ & $6263$ & $5$\\
\hline
$32$ & $5441$ & $3$\\
\hline
$33$ & $8779$ & $11$\\
\hline
$34$ & $7481$ & $6$\\
\hline
$35$ & $7841$ & $12$\\
\hline
$36$ & $10009$ & $11$\\
\hline
$37$ & $13469$ & $2$\\
\hline
$38$ & $12161$ & $3$\\
\hline
$39$ & $8971$ & $2$\\
\hline
$40$ & $14561$ & $6$\\
\hline
\end{tabular}\ \ \ \ \
\begin{tabular}{|r|r|r|}
\hline
$m$ & $N$ & $x$\\
\hline
$41$ & $13367$ & $5$\\
\hline
$42$ & $19993$ & $10$\\
\hline
$43$ & $14621$ & $2$\\
\hline
$44$ & $12497$ & $3$\\
\hline
$45$ & $14401$ & $11$\\
\hline
$46$ & $14537$ & $3$\\
\hline
$47$ & $20117$ & $2$\\
\hline
$48$ & $18913$ & $7$\\
\hline
$49$ & $22541$ & $3$\\
\hline
$50$ & $22901$ & $2$\\
\hline
$51$ & $19687$ & $5$\\
\hline
$52$ & $29537$ & $3$\\
\hline
$53$ & $26501$ & $2$\\
\hline
$54$ & $21493$ & $2$\\
\hline
$55$ & $23321$ & $3$\\
\hline
$56$ & $23297$ & $3$\\
\hline
$57$ & $21319$ & $14$\\
\hline
$58$ & $30509$ & $2$\\
\hline
$59$ & $28439$ & $11$\\
\hline
$60$ & $26041$ & $13$\\
\hline
$61$ & $45263$ & $5$\\
\hline
$62$ & $27281$ & $6$\\
\hline
$63$ & $30367$ & $5$\\
\hline
$64$ & $39041$ & $3$\\
\hline
$65$ & $37181$ & $2$\\
\hline
$66$ & $29569$ & $17$\\
\hline
$67$ & $38459$ & $2$\\
\hline
$68$ & $64601$ & $3$\\
\hline
$69$ & $31741$ & $6$\\
\hline
$70$ & $45641$ & $11$\\
\hline
$71$ & $36353$ & $3$\\
\hline
$72$ & $37441$ & $17$\\
\hline
$73$ & $44531$ & $2$\\
\hline
$74$ & $58313$ & $3$\\
\hline
$75$ & $48751$ & $3$\\
\hline
$76$ & $39521$ & $3$\\
\hline
$77$ & $70379$ & $6$\\
\hline
\end{tabular}\ \ \ \ \
\begin{tabular}{|r|r|r|}
\hline
$m$ & $N$ & $x$\\
\hline
$78$ & $53197$ & $2$\\
\hline
$79$ & $64781$ & $2$\\
\hline
$80$ & $53441$ & $3$\\
\hline
$81$ & $65287$ & $3$\\
\hline
$82$ & $64781$ & $2$\\
\hline
$83$ & $113213$ & $2$\\
\hline
$84$ & $76777$ & $5$\\
\hline
$85$ & $91121$ & $6$\\
\hline
$86$ & $80153$ & $3$\\
\hline
$87$ & $70123$ & $2$\\
\hline
$88$ & $67409$ & $3$\\
\hline
$89$ & $131543$ & $5$\\
\hline
$90$ & $74161$ & $7$\\
\hline
$91$ & $81173$ & $2$\\
\hline
$92$ & $80777$ & $3$\\
\hline
$93$ & $78307$ & $2$\\
\hline
$94$ & $70877$ & $2$\\
\hline
$95$ & $100511$ & $11$\\
\hline
$96$ & $136897$ & $5$\\
\hline
$97$ & $96419$ & $6$\\
\hline
$98$ & $105449$ & $6$\\
\hline
$99$ & $87517$ & $2$\\
\hline
$100$ & $95801$ & $3$\\
\hline
$101$ & $154127$ & $5$\\
\hline
$102$ & $95881$ & $13$\\
\hline
$103$ & $119687$ & $5$\\
\hline
$104$ & $131249$ & $3$\\
\hline
$105$ & $89671$ & $6$\\
\hline
$106$ & $144161$ & $3$\\
\hline
$107$ & $88811$ & $2$\\
\hline
$108$ & $122041$ & $7$\\
\hline
$109$ & $128621$ & $2$\\
\hline
$110$ & $122321$ & $6$\\
\hline
$111$ & $95461$ & $2$\\
\hline
$112$ & $122753$ & $3$\\
\hline
$113$ & $120233$ & $3$\\
\hline
$114$ & $98953$ & $10$\\
\hline
\end{tabular}
\newpage
\begin{tabular}{|r|r|r|}
\hline
$m$ & $N$ & $x$\\
\hline
$115$ & $115001$ & $3$\\
\hline
$116$ & $159617$ & $3$\\
\hline
$117$ & $118873$ & $5$\\
\hline
$118$ & $159773$ & $2$\\
\hline
$119$ & $166601$ & $6$\\
\hline
$120$ & $120721$ & $14$\\
\hline
$121$ & $176903$ & $5$\\
\hline
$122$ & $160553$ & $3$\\
\hline
$123$ & $145879$ & $13$\\
\hline
$124$ & $171617$ & $3$\\
\hline
$125$ & $121001$ & $6$\\
\hline
$126$ & $165817$ & $15$\\
\hline
$127$ & $182627$ & $2$\\
\hline
$128$ & $129281$ & $3$\\
\hline
$129$ & $142159$ & $6$\\
\hline
$130$ & $225941$ & $2$\\
\hline
$131$ & $208553$ & $3$\\
\hline
$132$ & $187441$ & $13$\\
\hline
$133$ & $173699$ & $2$\\
\hline
$134$ & $243077$ & $2$\\
\hline
$135$ & $197101$ & $2$\\
\hline
$136$ & $215153$ & $3$\\
\hline
$137$ & $190979$ & $6$\\
\hline
$138$ & $156217$ & $5$\\
\hline
$139$ & $179033$ & $3$\\
\hline
$140$ & $191801$ & $3$\\
\hline
$141$ & $224473$ & $10$\\
\hline
$142$ & $218681$ & $13$\\
\hline
$143$ & $200201$ & $3$\\
\hline
$144$ & $184321$ & $13$\\
\hline
$145$ & $218081$ & $6$\\
\hline
$146$ & $257837$ & $2$\\
\hline
$147$ & $221677$ & $2$\\
\hline
$148$ & $262553$ & $3$\\
\hline
$149$ & $238103$ & $5$\\
\hline
$150$ & $199501$ & $2$\\
\hline
$151$ & $237977$ & $3$\\
\hline
\end{tabular}\ \ \ \ \
\begin{tabular}{|r|r|r|}
\hline
$m$ & $N$ & $x$\\
\hline
$152$ & $213713$ & $3$\\
\hline
$153$ & $245719$ & $11$\\
\hline
$154$ & $590129$ & $3$\\
\hline
$155$ & $220721$ & $3$\\
\hline
$156$ & $254281$ & $7$\\
\hline
$157$ & $282287$ & $5$\\
\hline
$158$ & $352973$ & $2$\\
\hline
$159$ & $246769$ & $7$\\
\hline
$160$ & $281921$ & $3$\\
\hline
$161$ & $303647$ & $7$\\
\hline
$162$ & $347329$ & $7$\\
\hline
$163$ & $240263$ & $5$\\
\hline
$164$ & $278801$ & $3$\\
\hline
$165$ & $266641$ & $19$\\
\hline
$166$ & $292493$ & $3$\\
\hline
$167$ & $313961$ & $3$\\
\hline
$168$ & $294673$ & $5$\\
\hline
$169$ & $277499$ & $2$\\
\hline
$170$ & $329801$ & $3$\\
\hline
$171$ & $302329$ & $7$\\
\hline
$172$ & $320609$ & $3$\\
\hline
$173$ & $330431$ & $23$\\
\hline
$174$ & $285709$ & $2$\\
\hline
$175$ & $449051$ & $2$\\
\hline
$176$ & $375233$ & $3$\\
\hline
$177$ & $355063$ & $7$\\
\hline
$178$ & $395873$ & $3$\\
\hline
$179$ & $307523$ & $2$\\
\hline
$180$ & $361441$ & $13$\\
\hline
$181$ & $381911$ & $17$\\
\hline
$182$ & $347621$ & $3$\\
\hline
$183$ & $345139$ & $2$\\
\hline
$184$ & $315377$ & $3$\\
\hline
$185$ & $383321$ & $3$\\
\hline
$186$ & $418129$ & $7$\\
\hline
$187$ & $394571$ & $6$\\
\hline
$188$ & $429017$ & $3$\\
\hline
\end{tabular}\ \ \ \ \
\begin{tabular}{|r|r|r|}
\hline
$m$ & $N$ & $x$\\
\hline
$189$ & $333019$ & $11$\\
\hline
$190$ & $339341$ & $2$\\
\hline
$191$ & $557339$ & $2$\\
\hline
$192$ & $467329$ & $23$\\
\hline
$193$ & $452393$ & $3$\\
\hline
$194$ & $484613$ & $2$\\
\hline
$195$ & $280411$ & $2$\\
\hline
$196$ & $502937$ & $5$\\
\hline
$197$ & $397547$ & $2$\\
\hline
$198$ & $410257$ & $5$\\
\hline
$199$ & $342281$ & $3$\\
\hline
$200$ & $479201$ & $3$\\
\hline
$201$ & $412051$ & $2$\\
\hline
$202$ & $617717$ & $3$\\
\hline
$203$ & $426707$ & $2$\\
\hline
$204$ & $374137$ & $5$\\
\hline
$205$ & $345221$ & $3$\\
\hline
$206$ & $446609$ & $3$\\
\hline
$207$ & $424351$ & $3$\\
\hline
$208$ & $421409$ & $3$\\
\hline
$209$ & $390413$ & $2$\\
\hline
$210$ & $475441$ & $13$\\
\hline
$211$ & $505979$ & $2$\\
\hline
$212$ & $632609$ & $3$\\
\hline
$213$ & $569137$ & $5$\\
\hline
$214$ & $582509$ & $2$\\
\hline
$215$ & $484181$ & $2$\\
\hline
$216$ & $565489$ & $13$\\
\hline
$217$ & $521669$ & $2$\\
\hline
$218$ & $513173$ & $2$\\
\hline
$219$ & $536989$ & $7$\\
\hline
$220$ & $531521$ & $6$\\
\hline
$221$ & $687311$ & $7$\\
\hline
$222$ & $461317$ & $2$\\
\hline
$223$ & $516023$ & $5$\\
\hline
$224$ & $525953$ & $3$\\
\hline
$225$ & $601201$ & $14$\\
\hline
\end{tabular}
\newpage
\begin{tabular}{|r|r|r|}
\hline
$m$ & $N$ & $x$\\
\hline
$226$ & $539237$ & $2$\\
\hline
$227$ & $523463$ & $5$\\
\hline
$228$ & $585049$ & $7$\\
\hline
$229$ & $583493$ & $2$\\
\hline
$230$ & $555221$ & $10$\\
\hline
$231$ & $609379$ & $2$\\
\hline
$232$ & $609233$ & $3$\\
\hline
$233$ & $642149$ & $3$\\
\hline
$234$ & $496549$ & $2$\\
\hline
$235$ & $635441$ & $12$\\
\hline
$236$ & $575369$ & $3$\\
\hline
$237$ & $501493$ & $2$\\
\hline
$238$ & $637841$ & $21$\\
\hline
$239$ & $664421$ & $2$\\
\hline
$240$ & $653281$ & $7$\\
\hline
$241$ & $603947$ & $2$\\
\hline
$242$ & $691637$ & $2$\\
\hline
$243$ & $618679$ & $3$\\
\hline
$244$ & $746153$ & $3$\\
\hline
$245$ & $623771$ & $2$\\
\hline
$246$ & $661741$ & $2$\\
\hline
$247$ & $736061$ & $2$\\
\hline
$248$ & $631409$ & $3$\\
\hline
$249$ & $761443$ & $2$\\
\hline
$250$ & $653501$ & $2$\\
\hline
$251$ & $646577$ & $3$\\
\hline
$252$ & $632521$ & $11$\\
\hline
$253$ & $719027$ & $5$\\
\hline
$254$ & $689357$ & $2$\\
\hline
$255$ & $632911$ & $6$\\
\hline
$256$ & $724481$ & $3$\\
\hline
$257$ & $668201$ & $6$\\
\hline
$258$ & $751297$ & $5$\\
\hline
$259$ & $746957$ & $2$\\
\hline
$260$ & $710321$ & $3$\\
\hline
$261$ & $694261$ & $2$\\
\hline
$262$ & $793337$ & $3$\\
\hline
$263$ & $803729$ & $3$\\
\hline
\end{tabular}\ \ \ \ \
\begin{tabular}{|r|r|r|}
\hline
$m$ & $N$ & $x$\\
\hline
$264$ & $699073$ & $5$\\
\hline
$265$ & $880331$ & $7$\\
\hline
$266$ & $1229453$ & $2$\\
\hline
$267$ & $690997$ & $2$\\
\hline
$268$ & $941753$ & $3$\\
\hline
$269$ & $833363$ & $2$\\
\hline
$270$ & $689581$ & $10$\\
\hline
$271$ & $804329$ & $3$\\
\hline
$272$ & $875297$ & $3$\\
\hline
$273$ & $716899$ & $3$\\
\hline
$274$ & $778709$ & $2$\\
\hline
$275$ & $929501$ & $3$\\
\hline
$276$ & $724777$ & $10$\\
\hline
$277$ & $916871$ & $7$\\
\hline
$278$ & $856241$ & $3$\\
\hline
$279$ & $921259$ & $2$\\
\hline
$280$ & $975521$ & $11$\\
\hline
$281$ & $911003$ & $2$\\
\hline
$282$ & $680749$ & $2$\\
\hline
$283$ & $946919$ & $7$\\
\hline
$284$ & $983777$ & $3$\\
\hline
$285$ & $949621$ & $10$\\
\hline
$286$ & $1035893$ & $2$\\
\hline
$287$ & $1080269$ & $2$\\
\hline
$288$ & $816769$ & $13$\\
\hline
$289$ & $826541$ & $2$\\
\hline
$290$ & $1006301$ & $2$\\
\hline
$291$ & $1230349$ & $2$\\
\hline
$292$ & $1073393$ & $3 $\\
\hline
$293$ & $1181963$ & $2$\\
\hline
$294$ & $981373$ & $6$\\
\hline
$295$ & $918041$ & $3$\\
\hline
$296$ & $877937$ & $3$\\
\hline
$297$ & $880903$ & $3$\\
\hline
$298$ & $1086509$ & $2$\\
\hline
$299$ & $1288691$ & $2$\\
\hline
$300$ & $940801$ & $41$\\
\hline
$301$ & $1104671$ & $7$\\
\hline
\end{tabular}\ \ \ \ \
\begin{tabular}{|r|r|r|}
\hline
$m$ & $N$ & $x$\\
\hline
$302$ & $1111361$ & $3$\\
\hline
$303$ & $948391$ & $30$\\
\hline
$304$ & $964289$ & $3$\\
\hline
$305$ & $1087631$ & $34$\\
\hline
$306$ & $1171981$ & $2$\\
\hline
$307$ & $925913$ & $3$\\
\hline
$308$ & $1153769$ & $3$\\
\hline
$309$ & $975823$ & $3$\\
\hline
$310$ & $1009361$ & $3$\\
\hline
$311$ & $1015727$ & $5$\\
\hline
$312$ & $1129441$ & $14$\\
\hline
$313$ & $1214441$ & $3$\\
\hline
$314$ & $1366529$ & $3$\\
\hline
$315$ & $1167391$ & $14$\\
\hline
$316$ & $1216601$ & $6$\\
\hline
$317$ & $1381487$ & $5$\\
\hline
$318$ & $1176601$ & $11$\\
\hline
$319$ & $1052063$ & $5$\\
\hline
$320$ & $1210241$ & $3$\\
\hline
$321$ & $1145329$ & $7$\\
\hline
$322$ & $1409717$ & $2$\\
\hline
$323$ & $1149881$ & $7$\\
\hline
$324$ & $1082161$ & $7$\\
\hline
$325$ & $1066001$ & $3$\\
\hline
$326$ & $1270097$ & $3$\\
\hline
$327$ & $1043131$ & $11$\\
\hline
$328$ & $1144721$ & $3$\\
\hline
$329$ & $1309421$ & $10$\\
\hline
$330$ & $1151041$ & $17$\\
\hline
$331$ & $1397483$ & $2$\\
\hline
$332$ & $1496657$ & $3$\\
\hline
$333$ & $1235431$ & $3$\\
\hline
$334$ & $1269869$ & $2$\\
\hline
$335$ & $1345361$ & $6$\\
\hline
$336$ & $1109473$ & $5$\\
\hline
$337$ & $1317671$ & $11$\\
\hline
$338$ & $1643357$ & $2$\\
\hline
$339$ & $1332949$ & $6$\\
\hline
\end{tabular}\ \ \ \ \
\newpage
\begin{tabular}{|r|r|r|}
\hline
$m$ & $N$ & $x$\\
\hline
$340$ & $1247801$ & $3$\\
\hline
$341$ & $1434929$ & $3$\\
\hline
$342$ & $1240777$ & $7$\\
\hline
$343$ & $1423451$ & $2$\\
\hline
$344$ & $1922273$ & $3$\\
\hline
$345$ & $1267531$ & $2$\\
\hline
$346$ & $1325873$ & $3$\\
\hline
$347$ & $1345667$ & $2$\\
\hline
$348$ & $1251409$ & $14$\\
\hline
$349$ & $1741511$ & $7$\\
\hline
$350$ & $1378301$ & $10$\\
\hline
$351$ & $1308529$ & $7$\\
\hline
$352$ & $1490369$ & $3$\\
\hline
$353$ & $1650629$ & $2$\\
\hline
$354$ & $1215637$ & $2$\\
\hline
$355$ & $1392311$ & $13$\\
\hline
$356$ & $1536497$ & $3$\\
\hline
$357$ & $1391587$ & $2$\\
\hline
$358$ & $1644653$ & $2$\\
\hline
$359$ & $1482671$ & $7$\\
\hline
$360$ & $1204561$ & $29$\\
\hline
$361$ & $1608617$ & $3$\\
\hline
$362$ & $1755701$ & $2$\\
\hline
$363$ & $1577599$ & $3$\\
\hline
$364$ & $1486577$ & $3$\\
\hline
$365$ & $1658561$ & $6$\\
\hline
$366$ & $1630897$ & $10$\\
\hline
$367$ & $1551677$ & $2$\\
\hline
$368$ & $1389569$ & $3$\\
\hline
$369$ & $1461979$ & $2$\\
\hline
\end{tabular}\ \ \ \ \
\begin{tabular}{|r|r|r|}
\hline
$m$ & $N$ & $x$\\
\hline
$370$ & $1400081$ & $3$\\
\hline
$371$ & $1570073$ & $3$\\
\hline
$372$ & $1490233$ & $7$\\
\hline
$373$ & $2387201$ & $3$\\
\hline
$374$ & $1831853$ & $2$\\
\hline
$375$ & $1695751$ & $3$\\
\hline
$376$ & $1711553$ & $3$\\
\hline
$377$ & $1627133$ & $2$\\
\hline
$378$ & $1751653$ & $2$\\
\hline
$379$ & $1686551$ & $11$\\
\hline
$380$ & $1931921$ & $3$\\
\hline
$381$ & $1423417$ & $11$\\
\hline
$382$ & $1642601$ & $3$\\
\hline
$383$ & $1607069$ & $2$\\
\hline
$384$ & $1545217$ & $15$\\
\hline
$385$ & $1657811$ & $2$\\
\hline
$386$ & $1818833$ & $3$\\
\hline
$387$ & $1963639$ & $3$\\
\hline
$388$ & $1689353$ & $3$\\
\hline
$389$ & $2059367$ & $5$\\
\hline
$390$ & $1861861$ & $2$\\
\hline
$391$ & $1730567$ & $5$\\
\hline
$392$ & $1821233$ & $3$\\
\hline
$393$ & $1758283$ & $3$\\
\hline
$394$ & $1795853$ & $2$\\
\hline
$395$ & $1837541$ & $3$\\
\hline
$396$ & $1744777$ & $7$\\
\hline
$397$ & $1971503$ & $5$\\
\hline
$398$ & $2173877$ & $2$\\
\hline
$399$ & $2108317$ & $2$\\
\hline
$400$ & $1772801$ & $3$\\
\hline

\end{tabular}


\end{document}